\numberwithin{equation}{section}
\def\Ext{\mbox{\rm Ext}\,} \def\Hom{\mbox{\rm Hom}} \def\dim{\mbox{\rm dim}} 
\def\lr#1{\langle #1\rangle}
\def\zsum{\sum\limits_{i\in\mathbb{Z}_m}}
\def\Aut{\mbox{\rm Aut}}\def\A{\mathcal{A}\,}
\theoremstyle{plain}
\newtheorem{theorem}{\bf Theorem}[section]
\newtheorem{lemma}[theorem]{\bf Lemma}
\newtheorem{corollary}[theorem]{\bf Corollary}
\theoremstyle{definition}
\newtheorem{definition}[theorem]{\bf Definition}
\newtheorem{remark}[theorem]{\bf Remark}
\newtheorem{example}[theorem]{\bf Example}
\newcommand{\bt}{\begin{theorem}}
\newcommand{\et}{\end{theorem}}
\newcommand{\bl}{\begin{lemma}}
\newcommand{\el}{\end{lemma}}
\newcommand{\bd}{\begin{definition}}
\newcommand{\ed}{\end{definition}}
\newcommand{\bc}{\begin{corollary}}
\newcommand{\ec}{\end{corollary}}
\newcommand{\bp}{\begin{proof}}
\newcommand{\ep}{\end{proof}}
\newcommand{\bx}{\begin{example}}
\newcommand{\ex}{\end{example}}
\newcommand{\br}{\begin{remark}}
\newcommand{\er}{\end{remark}}
\newcommand{\be}{\begin{equation}}
\newcommand{\ee}{\end{equation}}
\newcommand{\ba}{\begin{align}}
\newcommand{\ea}{\end{align}}
\newcommand{\bn}{\begin{enumerate}}
\newcommand{\en}{\end{enumerate}}
\newcommand{\bcs}{\begin{cases}}
\newcommand{\ecs}{\end{cases}}
\renewcommand{\section}{\@startsection{section}{1}{0mm}
  {-\baselineskip}{0.5\baselineskip}{\bf\leftline}}
\begin{document}

\title[A Note on Odd Periodic derived Hall algebras]{A Note on Odd Periodic derived Hall algebras}
\author{Haicheng Zhang$^\ast$, Xinran Zhang, Zhiwei Zhu}
\address{Institute of Mathematics, School of Mathematical Sciences, Nanjing Normal University,
 Nanjing 210023, P. R. China.\endgraf}
\email{zhanghc@njnu.edu.cn (H.Zhang)}
\email{505853930@qq.com (X.Zhang)}
\email{1985933219@qq.com (Z.Zhu)}

\thanks{$\ast$: Corresponding author.}
\subjclass[2010]{17B37, 18E10, 16E60.}
\keywords{Derived Hall algebras; Odd periodic triangulated categories; $m$-periodic derived categories.}

\begin{abstract}
Let $m$ be an odd positive integer and $D_m(\mathcal {A})$ be the $m$-periodic derived category of a finitary hereditary abelian category $\A$. In this note, we prove that there is an embedding of algebras from the derived Hall algebra of $D_m(\mathcal {A})$ defined by Xu-Chen \cite{XuChen} to the extended derived Hall algebra of $D_m(\mathcal {A})$ defined in \cite{Zhang2}. This homomorphism is given on basis elements, rather than just on generating elements.
\end{abstract}

\maketitle

%%%%%%%%%%%%%%%%%%%%%%%%%%%%%%%%%%%%%%%%%%%%%%%%%%%%%%%%%%%%%%%%%%%%%%%%%%%%%%%%%%%%%%%%%%%%%%%%%%%%%%%%%%%%%%%%%%%%%%%%%%%%%%%%%%%%%%%%%%%%%%%%%%%%%%%%
\section{Introduction}
By Ringel \cite{R90,R90a} and Green \cite{Gr95}, the Ringel--Hall algebra of a hereditary algebra gives a realization of the half part of the corresponding quantum group.
To\"en \cite{Toen} defined the derived Hall algebra for a differential graded category satisfying some finiteness conditions, and
Xiao and Xu \cite{XiaoXu} generalised To\"en's construction to any triangulated category satisfying certain homological finiteness conditions. In 2011, Bridgeland \cite{Br} provided a realization of the entire quantum group by considering the localized Ringel--Hall algebra of 2-periodic complexes of projective modules over a hereditary algebra $A$. Inspired by the work of Bridgeland, for any positive integer $m$, Chen and Deng \cite{ChenD} considered Bridgeland's Hall algebra of $m$-periodic complexes of $A$. In order to generalise Bridgeland's construction to any hereditary abelian categories which may not have enough projectives, one defines the so-called semi-derived Ringel--Hall algebras (cf. \cite{Gorsky,LP,LinP}).

The derived  Hall algebra for an odd periodic triangulated category was defined by Xu-Chen \cite{XuChen}.
Recently, for a hereditary abelian category $\A$, Lin and Peng \cite{LinP} proved that there exists an embedding of algebras from the derived Hall algebra of the odd periodic derived category $D_m(\A)$ to the extended semi-derived Ringel--Hall algebra of $m$-periodic complexes of $\A$.
By analysing the relations between the extensions in the bounded derived category $D^b(\A)$ and the $m$-periodic derived category $D_m(\A)$, Zhang \cite{Zhang2} defined an $m$-periodic extended derived Hall algebra $\mathcal {D}\mathcal {H}_m^{\rm e}(\A)$ for $D_m(\A)$
by applying the (dual) derived Hall numbers of $D^b(\A)$ and proved that it is isomorphic to the twisted semi-derived Ringel--Hall algebra of $m$-periodic complexes of $\A$.
In particular, the $2$-periodic extended derived Hall algebra provides a Hall algebra for the root category of $\A$ and it is proved that $\mathcal {D}\mathcal {H}_2^{\rm e}(\A)$ is isomorphic to the Drinfeld double Hall algebra of $\A$ (cf. \cite{Zhang}).

In this paper, let $m$ be an odd positive integer and $\A$ be a finitary hereditary abelian category. The multiplication structure of the derived Hall algebra $\mathcal {D}\mathcal {H}_m(\A)$ of $D_m(\A)$ defined by Xu-Chen \cite{XuChen} has an explicit characterization in \cite{Zhang2}. Using this characterization and the explicit multiplication structure of $\mathcal {D}\mathcal {H}_m^{\rm e}(\A)$, we establish an embedding of algebras from $\mathcal {D}\mathcal {H}_m(\A)$ to $\mathcal {D}\mathcal {H}_m^{\rm e}(\A)$. Since $\mathcal {D}\mathcal {H}_m^{\rm e}(\A)$ is isomorphic to the twisted semi-derived Ringel--Hall algebra $\mathcal {S}\mathcal {D}\mathcal {H}_{\mathbb{Z}_m}(\A)$ of $m$-periodic complexes of $\A$, we have an embedding of algebras from $\mathcal {D}\mathcal {H}_m(\A)$ to $\mathcal {S}\mathcal {D}\mathcal {H}_{\mathbb{Z}_m}(\A)$. Compared with the embedding given in \cite{LinP}, this embedding is defined on the basis elements of $\mathcal {D}\mathcal {H}_m(\A)$, rather than just on generating elements, and the proof of the algebra homomorphism is direct.

Throughout the paper, $m$ is an odd positive integer, $\mathbb{F}_q$ is a finite field with $q$ elements and $v=\sqrt{q}\in\mathbb{C}$, $\A$ is an essentially small hereditary abelian $\mathbb{F}_q$-category, and $D^b(\mathcal {A})$ is the bounded derived category of $\A$ with the shift functor $[1]$. We always assume that $\A$ is finitary, i.e., for any objects $M,N\in\A$, the spaces $\Hom_{\A}(M,N)$ and $\Ext^1_{\A}(M,N)$ are both finite dimensional. Let $K(\mathcal{A})$ be the Grothendieck group of $\A$, we denote by $\hat{M}$ the image of $M$ in $K(\mathcal{A})$ for any $M\in\A$ and set $\frac{1}{2}K(\mathcal{A}):=\{\frac{1}{2}\alpha~|~\alpha\in K(\mathcal{A})\}$. For a finite set $S$, we denote by $|S|$ its cardinality. For an essentially small finitary category $\mathcal {E}$, we denote by ${\rm Iso}(\mathcal {E})$ the set of isomorphism classes $[X]$ of objects $X$ in $\mathcal {E}$; for each object $X\in\mathcal {E}$, denote by $\Aut_\mathcal {E} (X)$ the automorphism group of $X$. For any object $X\in\A$, we set $a_X:=|\Aut_{\A}(X)|$. We denote the quotient ring $\mathbb{Z}/m\mathbb{Z}$ by $\mathbb{Z}_m=\{0,1,\ldots,m-1\}$, and set $\prod\limits_{i\in\mathbb{Z}_m}a_i:=
a_0 a_1\cdots a_{m-1}$.

%%%%%%%%%%%%%%%%%%%%%%%%%%%%%%%%%%%%%%%%%%%%%%%%%%%%%%%%%%%%%%%%%%%%%%%%%%%%%%%%%%%%%%%%%%%%%%%%%%%%%%%%%%%%%%%%%%%%%%%%%
\section{Preliminaries}
In this section, we recall the definitions of Euler forms, dual derived Hall numbers and $m$-periodic (extended) derived Hall algebras.
\subsection{Euler forms}
For any objects $M,N \in \mathcal{A}$, set $$\lr{M,N}:=\dim_k\Hom_{\A}(M,N)-\dim_k\Ext^1_{\A}(M,N)$$
and it descends to give a bilinear form
$$\lr{\cdot ,\cdot }: K(\mathcal{A})\times K(\mathcal{A})\longrightarrow \mathbb{Z}$$ known as the \emph{Euler form}. The \emph{symmetric Euler form}
$$(\cdot ,\cdot ): K(\mathcal{A})\times K(\mathcal{A})\longrightarrow \mathbb{Z}$$ is defined by $(\alpha,\beta)=\lr{\alpha,\beta}+\lr{\beta,\alpha}$ for any $\alpha,\beta \in K(\mathcal{A})$.

\subsection{Derived Hall numbers}
For any objects $M,N\in D^b(\A)$, set $$\{M,N\}:=\prod\limits_{i>0}|\Hom_{D^b(\A)}(M[i],N)|^{(-1)^i}.$$
The (dual) \emph{derived Hall algebra} $\mathcal {D}\mathcal {H}(\A)$ of $\A$ is the $\mathbb{C}$-vector space with the basis $\{u_{[X]}~|~[X]\in {\rm Iso}(D^b(\A))\}$, and with the multiplication defined by
$$u_{[X]} u_{[Y]}=\sum\limits_{[L]\in {\rm Iso}(D^b(\A))}H_{X,Y}^Lu_{[L]}$$ where
$$H_{X,Y}^L:=\frac{|\Ext^1_{D^b(\A)}(X,Y)_{L}|}{|\Hom_{D^b(\A)}(X,Y)|}\cdot\frac{1}{\{X,Y\}}$$
and $\Ext^1_{D^b(\A)}(X,Y)_{L}$ denotes the subset of $\Hom_{D^b(\A)}(X,Y[1])$ consisting of morphisms $X\to Y[1]$ whose cone is isomorphic to $L[1]$.
By \cite{Toen,XiaoXu,XiaoXu2}, $\mathcal {D}\mathcal {H}(\A)$ is an associative and unital algebra.

\subsection{$m$-periodic (extended) derived Hall algebras}
By abuse of notation, in what follows, for each object $X\in D_m(\A)$, we may also write $u_X$ for $u_{[X]}$.
\begin{definition}$($\cite[Definition 3.1]{Zhang2}$)$\label{maindef}
The Hall algebra $\mathcal {D}\mathcal {H}_m^{\rm e}(\A)$, called the {\em $m$-periodic extended derived Hall algebra} of $\A$, is the $\mathbb{C}$-vector space with the basis $$\{u_{\bigoplus\limits_{i\in\mathbb{Z}_m}M_i[i]}\prod\limits_{i\in\mathbb{Z}_m}K_{\alpha_i,i}~|~[M_i]\in {\rm Iso}(\A),\alpha_i\in \frac{1}{2}K(\A)~\text{for~all~}i\in\mathbb{Z}_m\}$$ and with the multiplication defined on basis elements by
{\begin{equation*}
\begin{split}&(u_{\bigoplus\limits_{i\in\mathbb{Z}_m}A_i[i]}\prod\limits_{i\in\mathbb{Z}_m}K_{\alpha_i,i})(u_{\bigoplus\limits_{i\in\mathbb{Z}_m}B_i[i]}\prod\limits_{i\in\mathbb{Z}_m}K_{\beta_i,i})=\\&
v^{a_0}\sum\limits_{[I_i],[M_i]\in{\rm Iso}(\A), i\in\mathbb{Z}_m}v^{-(\hat{I}_{m-1},\alpha_0+\beta_0)+\sum\limits_{i=1}^{m-1}(\hat{I}_i,\alpha_{i-1}+\beta_{i-1})+\sum\limits_{i\in\mathbb{Z}_m}\lr{\hat{M}_i-\hat{M}_{i+1},\hat{I}_i}+\sum\limits_{i=1}^{m-1}\lr{\hat{I}_{i-1},\hat{I}_i}-\lr{\hat{I}_0,\hat{I}_{m-1}}}
\\&\quad\quad\quad\quad\quad\quad\quad\quad\quad\quad\quad\prod\limits_{i\in\mathbb{Z}_m}\frac{H_{I_i[1]\oplus A_i,B_i\oplus I_{i-1}[-1]}^{M_i}}{a_{I_i}} u_{\bigoplus\limits_{i\in\mathbb{Z}_m}M_i[i]}\prod\limits_{i\in\mathbb{Z}_m}K_{{\hat{I}}_i+\alpha_i+\beta_i,i}\end{split}\end{equation*}}
where $a_0=\sum\limits_{i\in\mathbb{Z}_m}\lr{\hat{A}_i,\hat{B}_i}+\sum\limits_{i\in\mathbb{Z}_m}(\alpha_i,\hat{B}_i-\hat{B}_{i+1})+\sum\limits_{i=1}^{m-1}(\alpha_i,\beta_{i-1})-(\alpha_{m-1},\beta_0)$.
By convention, $\sum\limits_{i=1}^{m-1}x_i=x_1$, if $m=1$.
In particular, we have that
{\begin{equation}\label{yschengfa}
\begin{split}&u_{\bigoplus\limits_{i\in\mathbb{Z}_m}A_i[i]}u_{\bigoplus\limits_{i\in\mathbb{Z}_m}B_i[i]}=
v^{\sum\limits_{i\in\mathbb{Z}_m}\lr{\hat{A}_i,\hat{B}_i}}\sum\limits_{[I_i],[M_i]\in{\rm Iso}(\A), i\in\mathbb{Z}_m}v^{\sum\limits_{i\in\mathbb{Z}_m}\lr{\hat{M}_i-\hat{M}_{i+1},\hat{I}_i}+\sum\limits_{i=1}^{m-1}\lr{\hat{I}_{i-1},\hat{I}_i}-\lr{\hat{I}_0,\hat{I}_{m-1}}}\\&
\quad\quad\quad\quad\quad\quad\quad\quad\quad\quad\quad\quad\quad\quad\prod\limits_{i\in\mathbb{Z}_m}\frac{H_{I_i[1]\oplus A_i,B_i\oplus I_{i-1}[-1]}^{M_i}}{a_{I_i}} u_{\bigoplus\limits_{i\in\mathbb{Z}_m}M_i[i]}\prod\limits_{i\in\mathbb{Z}_m}K_{{\hat{I}}_i,i},\end{split}\end{equation}}
{\begin{equation}\label{Kjiaohuan}
\begin{split}
&\prod\limits_{i\in\mathbb{Z}_m}K_{\alpha_i,i}\prod\limits_{i\in\mathbb{Z}_m}K_{\beta_i,i}=
v^{-(\alpha_{m-1},\beta_0)+\sum\limits_{i=1}^{m-1}(\alpha_i,\beta_{i-1})}\prod\limits_{i\in\mathbb{Z}_m}K_{\alpha_i+\beta_i,i},\end{split}\end{equation}}
{\begin{equation}\label{kujiaohuan}
\begin{split}&(\prod\limits_{i\in\mathbb{Z}_m}K_{\alpha_i,i})u_{\bigoplus\limits_{i\in\mathbb{Z}_m}B_i[i]}=
v^{\sum\limits_{i\in\mathbb{Z}_m}(\alpha_i,\hat{B}_i-\hat{B}_{i+1})}u_{\bigoplus\limits_{i\in\mathbb{Z}_m}B_i[i]}\prod\limits_{i\in\mathbb{Z}_m}K_{\alpha_i,i}.
\end{split}\end{equation}}
\end{definition}
By (\ref{Kjiaohuan}), it is easy to see that
\begin{equation}\label{kkjiaohuan}
\prod\limits_{i\in\mathbb{Z}_m}K_{\alpha_i,i}\prod\limits_{i\in\mathbb{Z}_m}K_{\beta_i,i}
=v^{\sum\limits_{i\in\mathbb{Z}_m}(\alpha_i,\beta_{i-1}-\beta_{i+1})}\prod\limits_{i\in\mathbb{Z}_m}K_{\beta_i,i}\prod\limits_{i\in\mathbb{Z}_m}K_{\alpha_i,i}.
\end{equation}
Compared with \cite[Definition 3.1]{Zhang2}, the Hall algebra $\mathcal {D}\mathcal {H}_m^{\rm e}(\A)$ here is larger, since the elements $K_{\alpha_i,i}$ here allow $\alpha_i$ to be taken in $\frac{1}{2}K(\A)$, which includes $K(\A)$.
By the same proof of \cite[Theorem 3.3]{Zhang2}, we have the following
\begin{theorem}$($\cite[Theorem 3.3]{Zhang2}$)$
The $m$-periodic extended derived Hall algebra $\mathcal {D}\mathcal {H}_m^{\rm e}(\A)$ is an associative algebra.
\end{theorem}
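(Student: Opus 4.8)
The plan is to prove associativity by verifying $(xy)z=x(yz)$ directly on the three families of basis elements, organising the computation around the two natural pieces of each basis element: the \emph{Hall part} $u_{\bigoplus_{i\in\mathbb{Z}_m}A_i[i]}$ and the \emph{torus part} $\prod_{i\in\mathbb{Z}_m}K_{\alpha_i,i}$. The relations (\ref{yschengfa}), (\ref{Kjiaohuan}) and (\ref{kujiaohuan}) already record, respectively, how two Hall parts multiply (producing new torus factors $K_{\hat{I}_i,i}$), how two torus parts multiply, and how a torus part commutes past a Hall part. Writing $x=u_{\bigoplus_i A_i[i]}\prod_i K_{\alpha_i,i}$, $y=u_{\bigoplus_i B_i[i]}\prod_i K_{\beta_i,i}$ and $z=u_{\bigoplus_i C_i[i]}\prod_i K_{\gamma_i,i}$, the first step is to bring any product of two such elements into the normal form (Hall part)$\cdot$(torus part) by commuting the left torus part to the right with (\ref{kujiaohuan}), multiplying the Hall parts with (\ref{yschengfa}), and absorbing the resulting torus factors with (\ref{Kjiaohuan}); this reproduces the closed formula of Definition \ref{maindef}, so the algebra is spanned by such normal forms and the problem is well posed.

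Granting this normal-form reduction, I would isolate the two independent ingredients. The torus elements are associative among themselves purely by bilinearity of the symmetric Euler form: the $v$-exponent accumulated in a triple torus product is a sum of terms $(\mu,\nu)$ that is additive in each argument, so the two bracketings agree; the cyclic boundary corrections (the $i=0$ and $i=m-1$ terms isolated in (\ref{Kjiaohuan}) and in the prefactor $a_0$) match because the index $i$ ranges over $\mathbb{Z}_m$ and the form is bilinear. Likewise the commutation relation (\ref{kujiaohuan}) is compatible with (\ref{Kjiaohuan}) in the sense needed to move an entire torus part across a product of Hall parts consistently, again by bilinearity of $(\cdot,\cdot)$ in the weight slot.

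The heart of the argument is the associativity of the reduced multiplication (\ref{yschengfa}). Here the weights play no role at all: its structure constants are the derived Hall numbers $H_{I_i[1]\oplus A_i,\,B_i\oplus I_{i-1}[-1]}^{M_i}$ together with a $v$-prefactor built from $\langle\cdot,\cdot\rangle$, and all of these depend only on the objects $A_i,B_i,C_i,I_i,M_i\in\A$, never on the weights, which enter the full formula solely through (\ref{Kjiaohuan}) and (\ref{kujiaohuan}). I would establish this associativity exactly as in \cite[Theorem 3.3]{Zhang2}: expand $(xy)z$ and $x(yz)$ as sums indexed by the auxiliary objects $I_i$ and the intermediate cones, apply the associativity identity for the derived Hall numbers of $D^b(\A)$ coming from the associative algebra $\mathcal{D}\mathcal{H}(\A)$ recalled above, component-wise over $i\in\mathbb{Z}_m$, to identify the two resulting sums of Hall numbers, and track the extra torus factors $K_{\hat{I}_i,i}$ they generate. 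The comparison then reduces to checking that the $v$-exponents produced by the two bracketings coincide.

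The main obstacle is precisely this exponent bookkeeping: one must verify that the powers of $v$ coming from (\ref{yschengfa}), (\ref{Kjiaohuan}), (\ref{kujiaohuan}) and the prefactor $a_0$, once the auxiliary objects are recombined under the two bracketings, add up to the same $\frac{1}{4}\mathbb{Z}$-valued quantity, paying particular attention to the non-cyclic boundary terms such as $-\langle\hat{I}_0,\hat{I}_{m-1}\rangle$ and $-(\alpha_{m-1},\beta_0)$, which are the only places where the $\mathbb{Z}_m$-symmetry is broken. This computation is carried out in full in \cite[Theorem 3.3]{Zhang2} for weights in $K(\A)$. The only new point here is that the weights $\alpha_i,\beta_i,\gamma_i$ may lie in the larger group $\frac{1}{2}K(\A)$; but every exponent in which a weight appears does so through the bilinear forms $\langle\cdot,\cdot\rangle$ and $(\cdot,\cdot)$, which extend $\mathbb{Q}$-bilinearly to $\frac{1}{2}K(\A)$, and the corresponding powers of $v=\sqrt{q}$ remain well-defined complex numbers (now possibly $\frac{1}{4}\mathbb{Z}$-valued). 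Since no other part of the computation sees the weights, the identical argument establishes associativity of $\mathcal{D}\mathcal{H}_m^{\rm e}(\A)$.
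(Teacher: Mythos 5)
Your proposal is correct and takes essentially the same route as the paper: the paper proves this theorem simply by invoking the proof of \cite[Theorem 3.3]{Zhang2} verbatim, noting that the only change is the enlargement of the weight lattice from $K(\mathcal{A})$ to $\frac{1}{2}K(\mathcal{A})$, which---exactly as you observe---is harmless because the weights enter the structure constants only through the ($\mathbb{Q}$-bilinearly extended) Euler forms, so the powers of $v$ remain well defined and the computation is unchanged. Your sketch of how that cited proof proceeds (normal-form reduction via (\ref{yschengfa})--(\ref{kujiaohuan}), componentwise use over $\mathbb{Z}_m$ of the associativity of the derived Hall numbers of $D^b(\mathcal{A})$, and the bookkeeping of $v$-exponents including the boundary terms) is consistent with it.
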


\begin{definition}$($\cite[Definition 4.1]{Zhang2}$)$\label{jidef}
The Hall algebra $\mathcal {D}\mathcal {H}_m(\A)$, called the {\em $m$-periodic derived Hall algebra} of $\A$, is the $\mathbb{C}$-vector space with the basis $$\{u_{\bigoplus\limits_{i\in\mathbb{Z}_m}M_i[i]}~|~[M_i]\in {\rm Iso}(\A)~\text{for~all~}i\in\mathbb{Z}_m\}$$ and with the multiplication defined on basis elements by
{\begin{equation}\begin{split}&u_{\bigoplus\limits_{i\in\mathbb{Z}_m}A_i[i]}u_{\bigoplus\limits_{i\in\mathbb{Z}_m}B_i[i]}\\&=
v^{\zsum\lr{\sum\limits_{k=0}^{m-1}(-1)^k\hat{A}_{i+k},\hat{B}_i}}\sum\limits_{[I_i],[M_i]\in{\rm Iso}(\A), i\in\mathbb{Z}_m}
\prod\limits_{i\in\mathbb{Z}_m}\frac{H_{I_i[1]\oplus A_i,B_i\oplus I_{i-1}[-1]}^{M_i}}{a_{I_i}} u_{\bigoplus\limits_{i\in\mathbb{Z}_m}M_i[i]}.\end{split}\end{equation}}
\end{definition}
\begin{theorem}$($\cite{Zhang2}$)$\label{jimain}
The $m$-periodic derived Hall algebra $\mathcal {D}\mathcal {H}_m(\A)$ coincides with the dual derived Hall algebra of $D_m(\mathcal {A})$ defined by Xu-Chen \cite{XuChen}.
\end{theorem}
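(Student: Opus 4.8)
The plan is to show that the To\"en--Xiao--Xu structure constants, specialised by Xu--Chen to the odd $m$-periodic triangulated category $D_m(\A)$, reduce precisely to the explicit coefficients of Definition \ref{jidef}. I would organise the comparison into an identification of bases followed by a term-by-term matching of structure constants, treating the scalar prefactor last.

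First I would fix a normal form for objects. Since $\A$ is hereditary, every object of $D^b(\A)$ splits as a direct sum of shifts of its cohomologies; passing to the $m$-periodic category $D_m(\A)$, in which $[m]\cong\mathrm{id}$, collapses the shift degree modulo $m$, so each object of $D_m(\A)$ is isomorphic to a unique $\osum M_i[i]$ with $M_i\in\A$. This matches the basis $\{u_{[X]}\mid[X]\in\Iso(D_m(\A))\}$ of the Xu--Chen algebra with the basis of $\mathcal{DH}_m(\A)$ in Definition \ref{jidef}, so only the structure constants remain to be compared.

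Next I would transport the three ingredients of the Xu--Chen structure constant -- the order of $\Hom_{D_m(\A)}(X,Y)$, the number of extensions $X\to Y[1]$ with prescribed cone, and the regularised derived Hall number $\{X,Y\}$ -- into data over $D^b(\A)$. Writing $X=\osum A_i[i]$ and $Y=\osum B_i[i]$, the orbit-category description $\Hom_{D_m(\A)}(X,Y)\cong\bigoplus_{k\in\mathbb{Z}}\Hom_{D^b(\A)}(X,Y[mk])$ together with hereditariness shows that only two layers survive in $\Hom_{D_m(\A)}(X,Y[1])$: the genuine $\Ext^1_\A$ between components of equal degree, and a wrap-around layer (the term $k=-1$) coupling degree $i$ with degree $i-1$. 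The auxiliary objects $I_i$ in Definition \ref{jidef} are exactly the isomorphism types of the images glued along these two layers, which is why they enter the formula through the outer terms $I_i[1]\oplus A_i$ and $B_i\oplus I_{i-1}[-1]$.

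The heart of the argument is to count, for a fixed cone $L=\osum M_i[i]$, the morphisms $X\to Y[1]$ in $D_m(\A)$ realising it, by resolving such a morphism through an octahedron in $D_m(\A)$ that separates it into its degree-homogeneous pieces. Each piece is an extension problem over $D^b(\A)$ with outer terms $I_i[1]\oplus A_i$ and $B_i\oplus I_{i-1}[-1]$ and middle term $M_i$, counted by the ordinary derived Hall number $H^{M_i}_{I_i[1]\oplus A_i,\,B_i\oplus I_{i-1}[-1]}$; dividing by $a_{I_i}$ removes the overcounting caused by the choice of glueing object, and summing over all $[I_i]$ reproduces the summand $\prod_{i}H^{M_i}_{I_i[1]\oplus A_i,\,B_i\oplus I_{i-1}[-1]}/a_{I_i}$. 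The main obstacle, and the step requiring the most care, is the bookkeeping of this octahedral decomposition in the periodic category together with the scalar: one must evaluate $\{X,Y\}$, where the periodic alternating product $\prod_{i>0}|\Hom_{D_m(\A)}(X[i],Y)|^{(-1)^i}$ is rendered finite using the odd period, and check that combining it with $|\Hom_{D_m(\A)}(X,Y)|^{-1}$ yields exactly the prefactor $v^{\zsum\lr{\sum_{k=0}^{m-1}(-1)^k\hat{A}_{i+k},\hat{B}_i}}$. Here the oddness of $m$ is indispensable: it is what makes the alternating signs $(-1)^k$ compatible with $[m]\cong\mathrm{id}$ and renders the regularised product well defined.
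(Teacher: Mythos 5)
The first thing to note is that the paper does not actually prove Theorem \ref{jimain}: it is imported from \cite{Zhang2} as a citation, so there is no in-paper argument to compare yours against. That said, your outline follows the route one would expect the cited proof to take, and the one that is consistent with everything else in the paper: split objects of $D_m(\A)$ as $\osum M_i[i]$ using hereditariness to match the bases; compute $\Hom_{D_m(\A)}$-spaces through the orbit-category description; decompose a morphism $X\to Y[1]$ into its diagonal $\Ext^1_{\A}(A_i,B_i)$-components and its off-diagonal $\Hom_{\A}(A_i,B_{i-1})$-components; and parametrise cones by the images $I_i$ of the latter, which is exactly why the outer terms $I_i[1]\oplus A_i$ and $B_i\oplus I_{i-1}[-1]$ and the relation $\hat M_i=\hat A_i+\hat B_i-\hat I_i-\hat I_{i-1}$ appear in Definition \ref{jidef} and in the proof of Theorem \ref{mainthm}. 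One small correction: only the single component $\Hom_{\A}(A_0,B_{m-1})$ arises from the $k=-1$ summand of the orbit-category sum; the remaining couplings of degree $i$ with degree $i-1$ already live in the $k=0$ summand, so the ``wrap-around layer'' is one component of the off-diagonal layer, not all of it.

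The substantive issue is that the two steps you yourself single out as the heart of the matter are asserted rather than carried out, and they are where essentially all of the work lies. (a) The claim that the octahedral decomposition converts the count of $f\in\Hom_{D_m(\A)}(X,Y[1])$ with $\cone(f)\cong L[1]$ into $\sum_{[I_i]}\prod_i H^{M_i}_{I_i[1]\oplus A_i,\,B_i\oplus I_{i-1}[-1]}/a_{I_i}$ is a genuine Riedtmann--Peng-type orbit count: you must show that the fibre of the decomposition map over a tuple $(I_i)$ has size governed exactly by $\prod_i a_{I_i}$ and by the automorphism and Hom factors already built into the derived Hall numbers $H^{M_i}_{-,-}$, and this requires an explicit comparison of triangles in $D_m(\A)$ with triangles in $D^b(\A)$, not just the existence of an octahedron. (b) The evaluation of $\{X,Y\}^{-1}\,|\Hom_{D_m(\A)}(X,Y)|^{-1}$ as $v^{\zsum\lr{\sum_{k=0}^{m-1}(-1)^k\hat{A}_{i+k},\hat{B}_i}}$ requires writing out the regularised alternating product over one period and, in particular, fixing which normalisation of $\{X,Y\}$ in the periodic setting is meant (the answer is a power of $v=\sqrt q$ rather than of $q$, so a half-period square root is involved); without this the prefactor of Definition \ref{jidef} is not verified. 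As a plan the proposal is sound and matches the expected strategy; as a proof it is incomplete at exactly these two points.
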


\section{An algebra embedding}
In this section, we give our main result as the following
\begin{theorem}\label{mainthm}
There exists an embedding of algebras $\varphi_m:\mathcal {D}\mathcal {H}_m(\A)\rightarrow \mathcal {D}\mathcal {H}_m^e(\A)$ defined by
$$u_{M_0}\mapsto u_{M_0}K_{-\frac{1}{2}\hat{M}_0},~\text{if}~m=1;$$
and
\begin{flalign*}u_{\bigoplus\limits_{i\in\mathbb{Z}_m}M_i[i]}\mapsto &v^{\frac{1}{4}\sum\limits_{i=1}^{m-1}\lr{\sum\limits_{k=0}^{m-1} (-1)^k{\hat{M}}_{i+k},\sum\limits_{k=0}^{m-1}(-1)^k\hat{M}_{i+1+k}}-\frac{1}{4}\lr{\sum\limits_{k=0}^{m-1}(-1)^k\hat{M}_{1+k},\sum\limits_{k=0}^{m-1}(-1)^k\hat{M}_{k}}
+\sum\limits_{i=0}^{m-1}\lr{\hat{M}_i,\sum\limits_{k=1}^{m-1}(-1)^k\hat{M}_{i+k}}}\\
&u_{\bigoplus\limits_{i\in\mathbb{Z}_m}M_i[i]}\prod\limits_{i\in\mathbb{Z}_m}K_{-\frac{1}{2}\sum\limits_{k=0}^{m-1}(-1)^k\hat{M}_{i+1+k},i},~\text{if}~m>1;\end{flalign*}
for any $M_i\in\A$ with $i\in\mathbb{Z}_m$.
\end{theorem}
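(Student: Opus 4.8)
The plan is to extend $\varphi_m$ to a linear map and then verify the two defining properties of an embedding of algebras. Injectivity is immediate: $\varphi_m$ sends each basis element $u_{\bigoplus_{i\in\mathbb{Z}_m}M_i[i]}$ to a \emph{nonzero} scalar multiple of the basis element $u_{\bigoplus_{i\in\mathbb{Z}_m}M_i[i]}\prod_{i\in\mathbb{Z}_m}K_{-\frac{1}{2}\sum_{k=0}^{m-1}(-1)^k\hat M_{i+1+k},i}$ of $\mathcal {D}\mathcal {H}_m^e(\A)$, and distinct families $([M_i])_{i\in\mathbb{Z}_m}$ produce images supported on distinct basis elements; hence the images are linearly independent. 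It therefore remains to prove that $\varphi_m$ is multiplicative.

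By linearity it suffices to check $\varphi_m(xy)=\varphi_m(x)\varphi_m(y)$ for $x=u_{\bigoplus_{i\in\mathbb{Z}_m}A_i[i]}$ and $y=u_{\bigoplus_{i\in\mathbb{Z}_m}B_i[i]}$. Put $\alpha_i=-\frac{1}{2}\sum_{k=0}^{m-1}(-1)^k\hat A_{i+1+k}$ and $\beta_i=-\frac{1}{2}\sum_{k=0}^{m-1}(-1)^k\hat B_{i+1+k}$, so that $\varphi_m(x)=v^{f(A)}u_{\bigoplus_{i}A_i[i]}\prod_{i}K_{\alpha_i,i}$ and similarly for $y$, where $f$ denotes the quadratic exponent written in the statement. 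I would expand $\varphi_m(x)\varphi_m(y)$ by the multiplication rule of Definition \ref{maindef}, and $\varphi_m(xy)$ by first applying Definition \ref{jidef} and then $\varphi_m$. The key structural observation is that both expansions are sums over the \emph{same} index set $\{([I_i],[M_i])\}_{i\in\mathbb{Z}_m}$ carrying the \emph{same} coefficient $\prod_{i\in\mathbb{Z}_m}H^{M_i}_{I_i[1]\oplus A_i,\,B_i\oplus I_{i-1}[-1]}/a_{I_i}$; thus the whole identity reduces to a term-by-term comparison of (i) the attached products of $K$-elements and (ii) the powers of $v$.

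For (i), a nonzero Hall number $H^{M_i}_{I_i[1]\oplus A_i,\,B_i\oplus I_{i-1}[-1]}$ forces the relation $\hat M_i=\hat A_i+\hat B_i-\hat I_i-\hat I_{i-1}$ in $K(\A)$, since the corresponding triangle yields $\hat M_i=-\hat I_i+\hat A_i+\hat B_i-\hat I_{i-1}$. The $K$-part produced by Definition \ref{maindef} is $\prod_{i}K_{\hat I_i+\alpha_i+\beta_i,i}$, whereas $\varphi_m(u_{\bigoplus_i M_i[i]})$ carries $\prod_{i}K_{-\frac{1}{2}\sum_{k}(-1)^k\hat M_{i+1+k},i}$, so I must check that $-\frac{1}{2}\sum_{k=0}^{m-1}(-1)^k\hat M_{i+1+k}=\hat I_i+\alpha_i+\beta_i$. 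Substituting the relation and using the cyclic identity $\sum_{k=0}^{m-1}(-1)^k\hat I_{i+k}+\sum_{k=0}^{m-1}(-1)^k\hat I_{i+1+k}=2\hat I_i$, which holds precisely because $m$ is odd (the boundary terms survive through $(-1)^m=-1$), the $I$-contribution collapses to $\hat I_i$ while the $A$- and $B$-parts reproduce $\alpha_i+\beta_i$, establishing (i).

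For (ii), after cancelling the common coefficients and $K$-parts the remaining claim is the scalar identity
\[ f(A)+f(B)+a_0+E=\sum_{i\in\mathbb{Z}_m}\lr{\sum_{k=0}^{m-1}(-1)^k\hat A_{i+k},\hat B_i}+f(M), \]
where $a_0$ and $E$ are, respectively, the global $v$-exponent and the summation exponent of Definition \ref{maindef} evaluated at $\alpha_i,\beta_i$, and where every $\hat M_i$ in $f(M)$ and in $E$ is first eliminated through $\hat M_i=\hat A_i+\hat B_i-\hat I_i-\hat I_{i-1}$. This is a purely formal identity of quadratic forms on $K(\A)$, which I would verify by expanding all cyclic alternating sums, applying the parity identity from (i) repeatedly, and using $(\alpha,\beta)=\lr{\alpha,\beta}+\lr{\beta,\alpha}$ to convert between the Euler form and the symmetric form. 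I expect this bookkeeping---matching the three $\frac{1}{4}$-quadratic exponents $f(A),f(B),f(M)$ against the symmetric cross terms hidden in $a_0$ and $E$, with all the sign care demanded by $m$ being odd---to be the main obstacle, though it is conceptually routine. Finally, the degenerate case $m=1$ is treated separately and directly from the simpler multiplication formulas, and serves as the base case.
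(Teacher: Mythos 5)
Your proposal follows essentially the same route as the paper's proof: reduce to basis elements, observe that both expansions run over the same index set $\{([I_i],[M_i])\}$ with identical coefficients $\prod_i H^{M_i}_{I_i[1]\oplus A_i,\,B_i\oplus I_{i-1}[-1]}/a_{I_i}$, match the $K$-components via the odd-$m$ telescoping identity $\sum_{k=0}^{m-1}(-1)^k(\hat I_{i+k}+\hat I_{i+1+k})=2\hat I_i$ together with $\hat M_i=\hat A_i+\hat B_i-\hat I_i-\hat I_{i-1}$, and reduce the multiplicativity to exactly the scalar exponent identity you display. Be aware, though, that the term-by-term expansion and cancellation you defer as ``conceptually routine bookkeeping'' is precisely what occupies almost the entirety of the paper's proof, so that step is where all the remaining work (and all the sign risk from the alternating cyclic sums) actually lives.
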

\begin{proof}
We need to prove that
\begin{equation}\label{alghom}
\begin{split}&\varphi(u_{\bigoplus\limits_{i\in\mathbb{Z}_{m}}A_{i}[i]}u_{\bigoplus\limits_{i\in\mathbb{Z}_{m}}B_{i}[i]})=
\varphi(u_{\bigoplus\limits_{i\in\mathbb{Z}_{m}}A_{i}[i]})\varphi(u_{\bigoplus\limits_{i\in\mathbb{Z}_{m}}B_{i}[i]})
\end{split}\end{equation}
for any $A_i,B_i\in\A$ with $i\in\mathbb{Z}_m$.

If $m=1$, $\varphi(u_{A_0}u_{B_0})=v^{\lr{\hat{A}_0,\hat{B}_0}}\sum\limits_{[I_0],[M_0]\in{\rm Iso}(\A)}\frac{H_{I_0[1]\oplus A_0,B_0\oplus I_0[-1]}^{M_0}}{a_{I_0}}u_{M_0}K_{-\frac{1}{2}\hat{M}_0}$ and
\begin{flalign*}
\varphi(u_{A_0})\varphi(u_{B_0})&=u_{A_0}K_{-\frac{1}{2}\hat{A}_0}u_{B_0}K_{-\frac{1}{2}\hat{B}_0}=u_{A_0}u_{B_0}K_{-\frac{1}{2}(\hat{A}_0+\hat{B}_0)}\\
&=v^{\lr{\hat{A}_0,\hat{B}_0}}\sum\limits_{[I_0],[M_0]\in{\rm Iso}(\A)}\frac{H_{I_0[1]\oplus A_0,B_0\oplus I_0[-1]}^{M_0}}{a_{I_0}}u_{M_0}K_{\hat{I}_0}K_{-\frac{1}{2}(\hat{A}_0+\hat{B}_0)}\\
&=v^{\lr{\hat{A}_0,\hat{B}_0}}\sum\limits_{[I_0],[M_0]\in{\rm Iso}(\A)}\frac{H_{I_0[1]\oplus A_0,B_0\oplus I_0[-1]}^{M_0}}{a_{I_0}}u_{M_0}K_{\hat{I}_0-\frac{1}{2}(\hat{A}_0+\hat{B}_0)}.
\end{flalign*}
Since $\hat{M}_0=\hat{A}_0+\hat{B}_0-2\hat{I}_0$, we get that $\hat{I}_0-\frac{1}{2}(\hat{A}_0+\hat{B}_0)=-\frac{1}{2}\hat{M}_0$. Thus, we obtain that $\varphi(u_{A_0}u_{B_0})=\varphi(u_{A_0})\varphi(u_{B_0})$.

If $m>1$, on the one hand,
\begin{equation*}
\begin{split}&\varphi(u_{\bigoplus\limits_{i\in\mathbb{Z}_{m}}A_{i}[i]}u_{\bigoplus\limits_{i\in\mathbb{Z}_{m}}B_{i}[i]})=\\&
v^{\sum\limits_{i\in\mathbb{Z}_{m}}{\lr{\sum\limits_{k=0}^{m-1}(-1)^{k}
\hat{A}_{i+k},\hat{B}_{i}}}}\sum\limits_{[I_{i}],[M_{i}],i\in\mathbb{Z}_{m}}\prod\limits_{i\in\mathbb{Z}_{m}}\frac{H_{I_i[1]\oplus A_i,B_i\oplus I_{i-1}[-1]}^{M_i}}{a_{I_i}} v^{m_0}
u_{\bigoplus\limits_{i\in\mathbb{Z}_{m}}M_i[i]}\prod\limits_{i\in\mathbb{Z}_{m}}K_{-\frac{1}{2}\sum\limits_{k=0}^{m-1}(-1)^k\hat{M}_{i+1+k},i}
\end{split}\end{equation*}
where \begin{flalign*}m_0=&\frac{1}{4}\sum\limits_{i=1}^{m-1}\lr{\sum\limits_{k=0}^{m-1}(-1)^k{\hat{M}}_{i+k},\sum\limits_{k=0}^{m-1}(-1)^k\hat{M}_{i+1+k}}-
\frac{1}{4}\lr{\sum\limits_{k=0}^{m-1}(-1)^k\hat{M}_{1+k},\sum\limits_{k=0}^{m-1}(-1)^k\hat{M}_{k}}
\\&+\sum\limits_{i=0}^{m-1}\lr{\hat{M}_i,\sum\limits_{k=1}^{m-1}(-1)^k\hat{M}_{i+k}}.\end{flalign*}

On the other hand,
\begin{equation*}
\begin{split}&\varphi(u_{\bigoplus\limits_{i\in\mathbb{Z}_{m}}A_{i}[i]})\varphi(u_{\bigoplus\limits_{i\in\mathbb{Z}_{m}}B_{i}[i]})=\\&
v^{a_0}
u_{\bigoplus\limits_{i\in\mathbb{Z}_{m}}A_i[i]}\prod\limits_{i\in\mathbb{Z}_{m}}K_{-\frac{1}{2}\sum\limits_{k=0}^{m-1}(-1)^k\hat{A}_{i+1+k},i}
v^{b_0}
u_{\bigoplus\limits_{i\in\mathbb{Z}_{m}}B_i[i]}\prod\limits_{i\in\mathbb{Z}_{m}}K_{-\frac{1}{2}\sum\limits_{k=0}^{m-1}(-1)^k\hat{B}_{i+1+k},i}
\end{split}\end{equation*}
where
\begin{flalign*}a_0=&\frac{1}{4}\sum\limits_{i=1}^{m-1}\lr{\sum\limits_{k=0}^{m-1}(-1)^k{\hat{A}}_{i+k},\sum\limits_{k=0}^{m-1}(-1)^k\hat{A}_{i+1+k}}-
\frac{1}{4}\lr{\sum\limits_{k=0}^{m-1}(-1)^k\hat{A}_{1+k},\sum\limits_{k=0}^{m-1}(-1)^k\hat{A}_{k}}
\\&+\sum\limits_{i=0}^{m-1}\lr{\hat{A}_i,\sum\limits_{k=1}^{m-1}(-1)^k\hat{A}_{i+k}}\end{flalign*} and
\begin{flalign*}b_0=&\frac{1}{4}\sum\limits_{i=1}^{m-1}\lr{\sum\limits_{k=0}^{m-1}(-1)^k{\hat{B}}_{i+k},\sum\limits_{k=0}^{m-1}(-1)^k\hat{B}_{i+1+k}}-
\frac{1}{4}\lr{\sum\limits_{k=0}^{m-1}(-1)^k\hat{B}_{1+k},\sum\limits_{k=0}^{m-1}(-1)^k\hat{B}_{k}}
\\&+\sum\limits_{i=0}^{m-1}\lr{\hat{B}_i,\sum\limits_{k=1}^{m-1}(-1)^k\hat{B}_{i+k}}.\end{flalign*}
Note that
\begin{equation*}
\begin{split}&\prod\limits_{i\in\mathbb{Z}_{m}}K_{-\frac{1}{2}\sum\limits_{k=0}^{m-1}(-1)^k\hat{A}_{i+1+k},i}u_{\bigoplus\limits_{i\in\mathbb{Z}_{m}}B_i[i]}=
v^{t_0}u_{\bigoplus\limits_{i\in\mathbb{Z}_{m}}B_i[i]}\prod\limits_{i\in\mathbb{Z}_{m}}K_{-\frac{1}{2}\sum\limits_{k=0}^{m-1}(-1)^k\hat{A}_{i+1+k},i},
\\&\prod\limits_{i\in\mathbb{Z}_{m}}K_{-\frac{1}{2}\sum\limits_{k=0}^{m-1}(-1)^k\hat{A}_{i+1+k},i}
\prod\limits_{i\in\mathbb{Z}_{m}}K_{-\frac{1}{2}\sum\limits_{k=0}^{m-1}(-1)^k\hat{B}_{i+1+k},i}=
v^{t_1}
\prod\limits_{i\in\mathbb{Z}_{m}}K_{{-\frac{1}{2}\sum\limits_{k=0}^{m-1}(-1)^k(\hat{A}_{i+1+k}+\hat{B}_{i+1+k})},i},
\\&u_{\bigoplus\limits_{i\in\mathbb{Z}_m}A_i[i]}u_{\bigoplus\limits_{i\in\mathbb{Z}_m}B_i[i]}=
\sum\limits_{[I_i],[M_i]\in{\rm Iso}(\A), i\in\mathbb{Z}_m}v^{t_2}\prod\limits_{i\in\mathbb{Z}_m}\frac{H_{I_i[1]\oplus A_i,B_i\oplus I_{i-1}[-1]}^{M_i}}{a_{I_i}} u_{\bigoplus\limits_{i\in\mathbb{Z}_m}M_i[i]}\prod\limits_{i\in\mathbb{Z}_m}K_{{\hat{I}}_i,i},
\\&\prod\limits_{i\in\mathbb{Z}_{m}}K_{{\hat{I}}_i,i}\prod\limits_{i\in\mathbb{Z}_{m}}K_{-\frac{1}{2}\sum\limits_{k=0}^{m-1}(-1)^k(\hat{A}_{i+1+k}+\hat{B}_{i+1+k}),i}=
v^{t_3}
\prod\limits_{i\in\mathbb{Z}_{m}}K_{{\hat{I}_i-\frac{1}{2}\sum\limits_{k=0}^{m-1}(-1)^k(\hat{A}_{i+1+k}+\hat{B}_{i+1+k})},i},
\end{split}\end{equation*}
where \begin{flalign*}&t_0=\sum\limits_{i\in\mathbb{Z}_{m}}(-\frac{1}{2}\sum\limits_{k=0}^{m-1}(-1)^k\hat{A}_{i+1+k},\hat{B}_i-\hat{B}_{i+1}),\\&
t_1=-(-\frac{1}{2}\sum\limits_{k=0}^{m-1}(-1)^k\hat{A}_{k},-\frac{1}{2}\sum\limits_{k=0}^{m-1}(-1)^k\hat{B}_{1+k})+\sum\limits_{i=1}^{m-1}(-\frac{1}{2}\sum\limits_{k=0}^{m-1}(-1)^k\hat{A}_{i+1+k},-\frac{1}{2}\sum\limits_{k=0}^{m-1}(-1)^k\hat{B}_{i+k}),
\\&t_2=\sum\limits_{i\in\mathbb{Z}_m}\lr{\hat{A}_i,\hat{B}_i}+\sum\limits_{i\in\mathbb{Z}_m}\lr{\hat{M}_i-\hat{M}_{i+1},\hat{I}_i}+\sum\limits_{i=1}^{m-1}\lr{\hat{I}_{i-1},\hat{I}_i}-\lr{\hat{I}_0,\hat{I}_{m-1}},
\\&t_3=-(\hat{I}_{m-1},-\frac{1}{2}\sum\limits_{k=0}^{m-1}(-1)^k(\hat{A}_{1+k}+\hat{B}_{1+k}))+
\sum\limits_{i=1}^{m-1}(\hat{I}_i,-\frac{1}{2}\sum\limits_{k=0}^{m-1}(-1)^k(\hat{A}_{i+k}+\hat{B}_{i+k})).
\end{flalign*}

Using $\hat{A}_i+\hat{B}_i-\hat{I}_i-\hat{I}_{i-1}=\hat{M}_i$ for all $i\in\mathbb{Z}_m$, we can work out that
$$\sum\limits_{k=0}^{m-1}(-1)^k(\hat{A}_{i+1+k}+\hat{B}_{i+1+k})-2\hat{I}_i=\sum_{k=0}^{m-1}(-1)^k\hat{M}_{i+1+k}$$
for each $i\in\mathbb{Z}_m$.
Thus,
\begin{flalign*}
&\varphi(u_{\bigoplus\limits_{i\in\mathbb{Z}_{m}}A_{i}[i]})\varphi(u_{\bigoplus\limits_{i\in\mathbb{Z}_{m}}B_{i}[i]})=\\ &\sum\limits_{[I_{i}],[M_{i}],i\in\mathbb{Z}_{m}}v^{a_0+b_0+t_0+t_1+t_2+t_3}\prod\limits_{i\in\mathbb{Z}_{m}}\frac{H_{I_i[1]\oplus A_i,B_i\oplus I_{i-1}[-1]}^{M_i}}{a_{I_i}}
u_{\bigoplus\limits_{i\in\mathbb{Z}_{m}}M_i[i]}\prod\limits_{i\in\mathbb{Z}_{m}}K_{-\frac{1}{2}\sum\limits_{k=0}^{m-1}(-1)^k\hat{M}_{i+1+k},i}.
\end{flalign*}

In what follows, let us compare the exponents of $v$ on two sides of the equation (\ref{alghom}).

Using $\hat{A}_i+\hat{B}_i-\hat{I}_i-\hat{I}_{i-1}=\hat{M}_i$ for each $i\in\mathbb{Z}_m$,
we obtain that the exponent of $v$ on the left side of (\ref{alghom})
\begin{flalign*}&{\rm LHS}=\sum\limits_{i\in\mathbb{Z}_{m}}{\lr{\sum\limits_{k=0}^{m-1}(-1)^{k}\hat{A}_{i+k},\hat{B}_{i}}}+
\\&\frac{1}{4}\sum\limits_{i=1}^{m-1}
\lr{\sum\limits_{k=0}^{m-1}(-1)^k({\hat{A}}_{i+k}+{\hat{B}}_{i+k}-{\hat{I}}_{i+k}-{\hat{I}}_{i+k-1}),\sum\limits_{k=0}^{m-1}(-1)^k({\hat{A}}_{i+1+k}+{\hat{B}}_{i+1+k}-{\hat{I}}_{i+1+k}-{\hat{I}}_{i+k})}
\\&
-\frac{1}{4}\lr{\sum\limits_{k=0}^{m-1}(-1)^k({\hat{A}}_{1+k}+{\hat{B}}_{1+k}-{\hat{I}}_{1+k}-{\hat{I}}_{k}),\sum\limits_{k=0}^{m-1}(-1)^k({\hat{A}}_{k}+{\hat{B}}_{k}-{\hat{I}}_{k}-{\hat{I}}_{k-1})}
\\&
+\sum\limits_{i=0}^{m-1}\lr{{\hat{A}}_{i}+{\hat{B}}_{i}-{\hat{I}}_{i}-{\hat{I}}_{i-1},\sum\limits_{k=1}^{m-1}(-1)^k({\hat{A}}_{i+k}+{\hat{B}}_{i+k}-{\hat{I}}_{i+k}-{\hat{I}}_{i+k-1})}.
\end{flalign*}
Thus, we have that
\begin{flalign*}&{\rm LHS}=\sum\limits_{i\in\mathbb{Z}_{m}}{\lr{\sum\limits_{k=0}^{m-1}(-1)^{k}\hat{A}_{i+k},\hat{B}_{i}}}+
\frac{1}{4}\sum\limits_{i=1}^{m-1}\lr{\sum\limits_{k=0}^{m-1}(-1)^k{\hat{A}}_{i+k},\sum\limits_{k=0}^{m-1}(-1)^k\hat{A}_{i+1+k}}^{\bf(1)}\\&+
\frac{1}{4}\sum\limits_{i=1}^{m-1}\lr{\sum\limits_{k=0}^{m-1}(-1)^k{\hat{A}}_{i+k},\sum\limits_{k=0}^{m-1}(-1)^k\hat{B}_{i+1+k}}-
\frac{1}{4}\sum\limits_{i=1}^{m-1}\lr{\sum\limits_{k=0}^{m-1}(-1)^k{\hat{A}}_{i+k},\sum\limits_{k=0}^{m-1}(-1)^k({\hat{I}}_{i+1+k}+{\hat{I}}_{i+k})}
\\&+\frac{1}{4}\sum\limits_{i=1}^{m-1}\lr{\sum\limits_{k=0}^{m-1}(-1)^k{\hat{B}}_{i+k},\sum\limits_{k=0}^{m-1}(-1)^k\hat{A}_{i+1+k}}^{\bf(2)}+
\frac{1}{4}\sum\limits_{i=1}^{m-1}\lr{\sum\limits_{k=0}^{m-1}(-1)^k{\hat{B}}_{i+k},\sum\limits_{k=0}^{m-1}(-1)^k\hat{B}_{i+1+k}}^{\bf(3)}
\\&-\frac{1}{4}\sum\limits_{i=1}^{m-1}\lr{\sum\limits_{k=0}^{m-1}(-1)^k{\hat{B}}_{i+k},\sum\limits_{k=0}^{m-1}(-1)^k({\hat{I}}_{i+1+k}+{\hat{I}}_{i+k})}\\&
-\frac{1}{4}\sum\limits_{i=1}^{m-1}\lr{\sum\limits_{k=0}^{m-1}(-1)^k({\hat{I}}_{i+k}+{\hat{I}}_{i+k-1}),\sum\limits_{k=0}^{m-1}(-1)^k{\hat{A}}_{i+1+k}}\\
&-\frac{1}{4}\sum\limits_{i=1}^{m-1}\lr{\sum\limits_{k=0}^{m-1}(-1)^k({\hat{I}}_{i+k}+{\hat{I}}_{i+k-1}),\sum\limits_{k=0}^{m-1}(-1)^k{\hat{B}}_{i+k+1}}
\end{flalign*}
\begin{flalign*}
&+\frac{1}{4}\sum\limits_{i=1}^{m-1}\lr{\sum\limits_{k=0}^{m-1}(-1)^k({\hat{I}}_{i+k}+{\hat{I}}_{i+k-1}),\sum\limits_{k=0}^{m-1}(-1)^k({\hat{I}}_{i+1+k}+{\hat{I}}_{i+k})}\\&-
\frac{1}{4}\lr{\sum\limits_{k=0}^{m-1}(-1)^k{\hat{A}}_{1+k},\sum\limits_{k=0}^{m-1}(-1)^k\hat{A}_{k}}^{\bf(4)}-
\frac{1}{4}\lr{\sum\limits_{k=0}^{m-1}(-1)^k{\hat{A}}_{1+k},\sum\limits_{k=0}^{m-1}(-1)^k\hat{B}_{k}}
\\&+\frac{1}{4}\lr{\sum\limits_{k=0}^{m-1}(-1)^k{\hat{A}}_{1+k},\sum\limits_{k=0}^{m-1}(-1)^k({\hat{I}}_{k}+{\hat{I}}_{k-1})}
-\frac{1}{4}\lr{\sum\limits_{k=0}^{m-1}(-1)^k{\hat{B}}_{1+k},\sum\limits_{k=0}^{m-1}(-1)^k\hat{A}_{k}}^{\bf(5)}
\\&-\frac{1}{4}\lr{\sum\limits_{k=0}^{m-1}(-1)^k{\hat{B}}_{1+k},\sum\limits_{k=0}^{m-1}(-1)^k\hat{B}_{k}}^{\bf(6)}+
\frac{1}{4}\lr{\sum\limits_{k=0}^{m-1}(-1)^k{\hat{B}}_{1+k},\sum\limits_{k=0}^{m-1}(-1)^k({\hat{I}}_{k}+{\hat{I}}_{k-1})}\\
&+\frac{1}{4}\lr{\sum\limits_{k=0}^{m-1}(-1)^k({\hat{I}}_{1+k}+{\hat{I}}_{k}),\sum\limits_{k=0}^{m-1}(-1)^k{\hat{A}}_{k}}+
\frac{1}{4}\lr{\sum\limits_{k=0}^{m-1}(-1)^k({\hat{I}}_{1+k}+{\hat{I}}_{k}),\sum\limits_{k=0}^{m-1}(-1)^k{\hat{B}}_{k}}\\&-
\frac{1}{4}\lr{\sum\limits_{k=0}^{m-1}(-1)^k({\hat{I}}_{1+k}+{\hat{I}}_{k}),\sum\limits_{k=0}^{m-1}(-1)^k({\hat{I}}_{k}+{\hat{I}}_{k-1})}+
\sum\limits_{i=0}^{m-1}\lr{{\hat{A}}_{i},\sum\limits_{k=1}^{m-1}(-1)^k{\hat{A}}_{i+k}}^{\bf(7)}\\&+
\sum\limits_{i=0}^{m-1}\lr{{\hat{A}}_{i},\sum\limits_{k=1}^{m-1}(-1)^k{\hat{B}}_{i+k}}-
\sum\limits_{i=0}^{m-1}\lr{{\hat{A}}_{i},\sum\limits_{k=1}^{m-1}(-1)^k({\hat{I}}_{i+k}+{\hat{I}}_{i+k-1})}
\\&+\sum\limits_{i=0}^{m-1}\lr{{\hat{B}}_{i},\sum\limits_{k=1}^{m-1}(-1)^k{\hat{A}}_{i+k}}+
\sum\limits_{i=0}^{m-1}\lr{{\hat{B}}_{i},\sum\limits_{k=1}^{m-1}(-1)^k{\hat{B}}_{i+k}}^{\bf(8)}\\&-
\sum\limits_{i=0}^{m-1}\lr{{\hat{B}}_{i},\sum\limits_{k=1}^{m-1}(-1)^k({\hat{I}}_{i+k}+{\hat{I}}_{i+k-1})}-
\sum\limits_{i=0}^{m-1}\lr{{\hat{I}}_{i}+{\hat{I}}_{i-1},\sum\limits_{k=1}^{m-1}(-1)^k{\hat{A}}_{i+k}}\\&-
\sum\limits_{i=0}^{m-1}\lr{{\hat{I}}_{i}+{\hat{I}}_{i-1},\sum\limits_{k=1}^{m-1}(-1)^k{\hat{B}}_{i+k}}+
\sum\limits_{i=0}^{m-1}\lr{{\hat{I}}_{i}+{\hat{I}}_{i-1},\sum\limits_{k=1}^{m-1}(-1)^k({\hat{I}}_{i+k}+{\hat{I}}_{i+k-1})}.
\end{flalign*}

Using $\hat{A}_i+\hat{B}_i-\hat{I}_i-\hat{I}_{i-1}=\hat{M}_i$ for each $i\in\mathbb{Z}_m$,
we obtain that the exponent of $v$ on the right side of (\ref{alghom})
\begin{flalign*}&{\rm RHS}=\frac{1}{4}\sum\limits_{i=1}^{m-1}\lr{\sum\limits_{k=0}^{m-1}(-1)^k{\hat{A}}_{i+k},\sum\limits_{k=0}^{m-1}(-1)^k\hat{A}_{i+1+k}}^{\bf (1)}-
\frac{1}{4}\lr{\sum\limits_{k=0}^{m-1}(-1)^k\hat{A}_{1+k},\sum\limits_{k=0}^{m-1}(-1)^k\hat{A}_{k}}^{\bf (4)}
\\&+\sum\limits_{i=0}^{m-1}\lr{\hat{A}_i,\sum\limits_{k=1}^{m-1}(-1)^k\hat{A}_{i+k}}^{\bf (7)}+
\frac{1}{4}\sum\limits_{i=1}^{m-1}\lr{\sum\limits_{k=0}^{m-1}(-1)^k{\hat{B}}_{i+k},\sum\limits_{k=0}^{m-1}(-1)^k\hat{B}_{i+1+k}}^{\bf (3)}
\\&-\frac{1}{4}\lr{\sum\limits_{k=0}^{m-1}(-1)^k\hat{B}_{1+k},\sum\limits_{k=0}^{m-1}(-1)^k\hat{B}_{k}}^{\bf (6)}
+\sum\limits_{i=0}^{m-1}\lr{\hat{B}_i,\sum\limits_{k=1}^{m-1}(-1)^k\hat{B}_{i+k}}^{\bf (8)}\\&-\frac{1}{2}\sum\limits_{i\in\mathbb{Z}_{m}}{\lr{\sum\limits_{k=0}^{m-1}(-1)^k\hat{A}_{i+1+k},\hat{B}_i-\hat{B}_{i+1}}}-
\frac{1}{2}\sum\limits_{i\in\mathbb{Z}_{m}}{\lr{\hat{B}_i-\hat{B}_{i+1},\sum\limits_{k=0}^{m-1}(-1)^k\hat{A}_{i+1+k}}}
\end{flalign*}
\begin{flalign*}&-\frac{1}{4}{\lr{\sum\limits_{k=0}^{m-1}(-1)^k\hat{A}_{k},\sum\limits_{k=0}^{m-1}(-1)^k\hat{B}_{1+k}}}-
\frac{1}{4}\lr{\sum\limits_{k=0}^{m-1}(-1)^k\hat{B}_{1+k},\sum\limits_{k=0}^{m-1}(-1)^k\hat{A}_{k}}^{\bf (5)}
\\&+\frac{1}{4}\sum\limits_{i=1}^{m-1}{\lr{\sum\limits_{k=0}^{m-1}(-1)^k\hat{A}_{i+1+k},\sum\limits_{k=0}^{m-1}(-1)^k\hat{B}_{i+k}}}+
\frac{1}{4}\sum\limits_{i=1}^{m-1}\lr{\sum\limits_{k=0}^{m-1}(-1)^k\hat{B}_{i+k},\sum\limits_{k=0}^{m-1}(-1)^k\hat{A}_{i+1+k}}^{\bf (2)}
\\&+\sum\limits_{i\in\mathbb{Z}_m}\lr{\hat{A}_i,\hat{B}_i}+
\sum\limits_{i\in\mathbb{Z}_m}\lr{\hat{A}_i-\hat{A}_{i+1},\hat{I}_i}+\sum\limits_{i\in\mathbb{Z}_m}\lr{\hat{B}_i-\hat{B}_{i+1},\hat{I}_i}+
\sum\limits_{i\in\mathbb{Z}_m}\lr{\hat{I}_{i+1}-\hat{I}_{i-1},\hat{I}_i}
\\&+\sum\limits_{i=1}^{m-1}\lr{\hat{I}_{i-1},\hat{I}_i}^{\bf (11)}-\lr{\hat{I}_0,\hat{I}_{m-1}}^{\bf (14)}+
\frac{1}{2}\lr{\hat{I}_{m-1},\sum\limits_{k=0}^{m-1}(-1)^k\hat{A}_{1+k}}^{\bf (a)}+\frac{1}{2}\lr{\hat{I}_{m-1},\sum\limits_{k=0}^{m-1}(-1)^k\hat{B}_{1+k}}^{\bf (b)}\\
&+\frac{1}{2}\lr{{\sum\limits_{k=0}^{m-1}(-1)^k\hat{A}_{1+k},\hat{I}_{m-1}}}^{\bf (12)}+\frac{1}{2}\lr{\sum\limits_{k=0}^{m-1}(-1)^k\hat{B}_{1+k},\hat{I}_{m-1}}^{\bf (13)}-
\frac{1}{2}\sum\limits_{i=1}^{m-1}{\lr{\hat{I}_{i},\sum\limits_{k=0}^{m-1}(-1)^k\hat{A}_{i+k}}^{\bf (c)}}
\\&-\frac{1}{2}\sum\limits_{i=1}^{m-1}{\lr{\hat{I}_{i},\sum\limits_{k=0}^{m-1}(-1)^k\hat{B}_{i+k}}^{\bf (d)}}-
\frac{1}{2}\sum\limits_{i=1}^{m-1}\lr{{\sum\limits_{k=0}^{m-1}(-1)^k\hat{A}_{i+k},\hat{I}_{i}}}^{\bf (9)}-\frac{1}{2}\sum\limits_{i=1}^{m-1}\lr{\sum\limits_{k=0}^{m-1}(-1)^k\hat{B}_{i+k},\hat{I}_{i}}^{\bf (10)}.
\end{flalign*}

It is easy to see that \begin{equation}\label{tihuan}\sum\limits_{k=0}^{m-1}(-1)^k({\hat{I}}_{i+k}+{\hat{I}}_{i+k-1})=2\hat{I}_{i-1}~\text{and}~ \sum\limits_{k=1}^{m-1}(-1)^k({\hat{I}}_{i+k}+{\hat{I}}_{i+k-1})=\hat{I}_{i-1}-\hat{I}_{i}\end{equation} for each $i\in\mathbb{Z}_{m}$.

Substituting (\ref{tihuan}) into LHS and eliminating the common terms {({\bf 1})}-{({\bf 8})} with RHS, which have been marked with numbers in bold, we obtain that the remaining terms in LHS

\begin{flalign*}&{\rm LHS'} = \sum\limits_{i \in \mathbb{Z}_{m}}\lr{\sum\limits_{k=0}\limits^{m-1} (-1)^k\hat{A}_{i+k},\hat{B}_{i}}+\frac{1}{4}\sum\limits_{i=1}\limits^{m-1} \lr{\sum\limits_{k=0}\limits^{m-1}(-1)^{k} \hat{A}_{i+k},\sum\limits_{k=0}\limits^{m-1}(-1)^{k}\hat{B}_{i+k+1}}\\&-\frac{1}{2}\sum\limits_{i=1}\limits^{m-1} \lr{\sum\limits_{k=0}\limits^{m-1}(-1)^{k} \hat{A}_{i+k},\hat{I}_{i}}^{\bf (9)}-\frac{1}{2}\sum\limits_{i=1}\limits^{m-1} \lr{\sum\limits_{k=0}\limits^{m-1}(-1)^{k} \hat{B}_{i+k},\hat{I}_{i}}^{\bf (10)}\\&-\frac{1}{2}\sum\limits_{i=1}\limits^{m-1} \lr{\hat{I}_{i-1},\sum\limits_{k=0}\limits^{m-1}(-1)^{k}\hat{A}_{i+1+k}}^{\bf (a')}-\frac{1}{2}\sum\limits_{i=1}\limits^{m-1} \lr{ \hat{I}_{i-1},\sum\limits_{k=0}\limits^{m-1}(-1)^{k}\hat{B}_{i+1+k}}^{\bf (b')} \\&+\sum\limits_{i=1}\limits^{m-1} \lr{ \hat{I}_{i-1},\hat{I}_{i}}^{\bf (11)}-\frac{1}{4} \lr{\sum\limits_{k=0}\limits^{m-1}(-1)^{k} \hat{A}_{1+k},\sum\limits_{k=0}\limits^{m-1}(-1)^{k}\hat{B}_{k}}+\frac{1}{2} \lr{\sum\limits_{k=0}\limits^{m-1}(-1)^{k} \hat{A}_{1+k},\hat{I}_{m-1}}^{\bf (12)}\\&+\frac{1}{2}\lr{\sum\limits_{k=0}\limits^{m-1}(-1)^{k} \hat{B}_{1+k},\hat{I}_{m-1}}^{\bf (13)}+\frac{1}{2}\lr{\hat{I}_{0},\sum\limits_{k=0}\limits^{m-1}(-1)^{k}\hat{A}_{k}}^{\bf (c')}+\frac{1}{2}\lr{\hat{I}_{0},\sum\limits_{k=0}\limits^{m-1}(-1)^{k}\hat{B}_{k}}^{\bf (d')}-\lr{\hat{I_{0}},\hat{I}_{m-1}}^{\bf (14)}\\
&+\sum\limits_{i=0}\limits^{m-1} \lr{\hat{A}_{i},\sum\limits_{k=1}\limits^{m-1}(-1)^{k}\hat{B}_{i+k}}-\sum\limits_{i=0}\limits^{m-1} \lr{ \hat{A}_{i},\hat{I}_{i-1}-\hat{I}_{i}}+\sum\limits_{i=0}\limits^{m-1} \lr{\hat{B}_{i},\sum\limits_{k=1}\limits^{m-1}(-1)^{k}\hat{A}_{i+k}}-\sum\limits_{i=0}\limits^{m-1} \lr{ \hat{B}_{i},\hat{I}_{i-1}-\hat{I}_{i}}\end{flalign*}
\begin{flalign*}
&-\sum\limits_{i=0}\limits^{m-1} \lr{\hat{I}_{i}+\hat{I}_{i-1},\sum\limits_{k=1}\limits^{m-1}(-1)^{k}\hat{A}_{i+k}}-\sum\limits_{i=0}\limits^{m-1} \lr{ \hat{I}_{i}+\hat{I}_{i-1},\sum\limits_{k=1}\limits^{m-1}(-1)^{k}\hat{B}_{i+k}}+\sum\limits_{i=0}\limits^{m-1} \lr{\hat{I}_{i}+\hat{I}_{i-1},\hat{I}_{i-1}-\hat{I}_{i}}.
\end{flalign*}

Eliminating the common terms {({\bf 9})}-{({\bf 14})}  with RHS and combining the terms $({\bf a}),({\bf b}),({\bf c}),({\bf d})$ in RHS with $({\bf a}'),({\bf b}'),({\bf c}'),({\bf d}')$ in LHS$'$, respectively, we obtain that

\begin{flalign*}&{\rm RHS}-{\rm LHS}=\frac{1}{2}\sum\limits_{i\in \mathbb{Z}_{m}} \lr{\sum\limits_{k=0}\limits^{m-1}(-1)^{k} \hat{A}_{i+k},\hat{B}_{i}}^{(\bf i)}-\frac{1}{2}\sum\limits_{i\in \mathbb{Z}_{m}} \lr{\sum\limits_{k=0}\limits^{m-1}(-1)^{k} \hat{A}_{i+k+1},\hat{B}_{i}}\\&
+\frac{1}{2}\sum\limits_{i\in \mathbb{Z}_{m}} \lr{\hat{B}_{i},\sum\limits_{k=0}\limits^{m-1}(-1)^{k}\hat{A}_{i+k}}-\frac{1}{2}\sum\limits_{i\in \mathbb{Z}_{m}} \lr{\hat{B}_{i},\sum\limits_{k=0}\limits^{m-1}(-1)^{k}\hat{A}_{i+k+1}}\\&-\frac{1}{4} \lr{\sum\limits_{k=0}\limits^{m-1}(-1)^{k} \hat{A}_{k},\sum\limits_{k=0}\limits^{m-1}(-1)^{k}\hat{B}_{k+1}}^{({\bf e})}
+\frac{1}{4}\sum\limits_{i=1}\limits^{m-1} \lr{\sum\limits_{k=0}\limits^{m-1}(-1)^{k} \hat{A}_{i+k+1},\sum\limits_{k=0}\limits^{m-1}(-1)^{k}\hat{B}_{i+k}}^{(\bf f)}\\&+\sum\limits_{i\in \mathbb{Z}_{m}} \lr{\hat{A}_{i},\hat{B}_{i}}+\sum\limits_{i\in \mathbb{Z}_{m}} \lr{ \hat{A}_{i},\hat{I}_{i}}-\sum\limits_{i\in \mathbb{Z}_{m}} \lr{ \hat{A}_{i},\hat{I}_{i-1}}
+\sum\limits_{i\in \mathbb{Z}_{m}} \lr{ \hat{B}_{i},\hat{I}_{i}}-\sum\limits_{i\in \mathbb{Z}_{m}} \lr{ \hat{B}_{i},\hat{I}_{i-1}}\\&+\sum\limits_{i\in \mathbb{Z}_{m}} \lr{\hat{I}_{i},\hat{I}_{i-1}}-\sum\limits_{i\in \mathbb{Z}_{m}} \lr{\hat{I}_{i-1},\hat{I}_{i}}+\frac{1}{2}\sum\limits_{i\in \mathbb{Z}_{m}} \lr{ \hat{I}_{i-1},\sum\limits_{k=0}\limits^{m-1}(-1)^{k}\hat{A}_{i+k+1}}\\&+\frac{1}{2}\sum\limits_{i\in \mathbb{Z}_{m}} \lr{\hat{I}_{i-1},\sum\limits_{k=0}\limits^{m-1}(-1)^{k}\hat{B}_{i+k+1}}-\frac{1}{2}\sum\limits_{i\in \mathbb{Z}_{m}} \lr{\hat{I}_{i},\sum\limits_{k=0}\limits^{m-1}(-1)^{k}\hat{A}_{i+k}}\\&-\frac{1}{2}\sum\limits_{i\in \mathbb{Z}_{m}} \lr{\hat{I}_{i},\sum\limits_{k=0}\limits^{m-1}(-1)^{k}\hat{B}_{i+k}}-\sum\limits_{i\in \mathbb{Z}_{m}} \lr{\sum\limits_{k=0}\limits^{m-1}(-1)^{k} \hat{A}_{i+k},\hat{B}_{i}}^{(\bf i')}\\&-\frac{1}{4}\sum\limits_{i=1}\limits^{m-1} \lr{\sum\limits_{k=0}\limits^{m-1}(-1)^{k} \hat{A}_{i+k},\sum\limits_{k=0}\limits^{m-1}(-1)^{k}\hat{B}_{i+k+1}}^{(\bf e')}+\frac{1}{4} \lr{\sum\limits_{k=0}\limits^{m-1}(-1)^{k} \hat{A}_{k+1},\sum\limits_{k=0}\limits^{m-1}(-1)^{k}\hat{B}_{k}}^{(\bf f')}\\&-\sum\limits_{i\in \mathbb{Z}_{m}} \lr{\hat{A}_{i},\sum\limits_{k=1}\limits^{m-1}(-1)^{k}\hat{B}_{i+k}}+\sum\limits_{i\in \mathbb{Z}_{m}} \lr{\hat{A}_{i},\hat{I}_{i-1}-\hat{I}_{i}}-\sum\limits_{i\in \mathbb{Z}_{m}} \lr{\hat{B}_{i},\sum\limits_{k=1}\limits^{m-1}(-1)^{k}\hat{A}_{i+k}}\\&+\sum\limits_{i\in \mathbb{Z}_{m}} \lr{\hat{B}_{i},\hat{I}_{i-1}-\hat{I}_{i}}+\sum\limits_{i\in \mathbb{Z}_{m}} \lr{\hat{I}_{i},\sum\limits_{k=1}\limits^{m-1}(-1)^{k}\hat{A}_{i+k}}^{(\bf g)}+\sum\limits_{i\in  \mathbb{Z}_{m}} \lr{\hat{I}_{i},\sum\limits_{k=1}\limits^{m-1}(-1)^{k}\hat{A}_{i+k+1}}^{(\bf g')}\\&+\sum\limits_{i\in \mathbb{Z}_{m}} \lr{\hat{I}_{i},\sum\limits_{k=1}\limits^{m-1}(-1)^{k}\hat{B}_{i+k}}^{(\bf h)}+\sum\limits_{i\in \mathbb{Z}_{m}}\lr{\hat{I}_{i},\sum\limits_{k=1}\limits^{m-1}(-1)^{k}\hat{B}_{i+k+1}}^{(\bf h')}-\sum\limits_{i\in \mathbb{Z}_{m}} \lr{\hat{I}_{i}+\hat{I}_{i-1},\hat{I}_{i-1}-\hat{I}_{i}}.
\end{flalign*}
Let us combine the terms $({\bf e}),({\bf f}),({\bf g}),({\bf h}),({\bf i})$ in RHS--LHS with $({\bf e}'),({\bf f}'),({\bf g}'),({\bf h}'),({\bf i}')$, respectively, we have that
\begin{flalign*}
({\bf e})+({\bf e}')=-\frac{1}{4}\sum\limits_{i\in\mathbb{Z}_m}\lr{\sum\limits_{k\in\mathbb{Z}_m}(-1)^k\hat{A}_{i+k},\sum\limits_{k\in\mathbb{Z}_m}(-1)^k\hat{B}_{i+k+1}},\end{flalign*}
\begin{flalign*}({\bf f})+({\bf f}')=\frac{1}{4}\sum\limits_{i\in\mathbb{Z}_m}\lr{\sum\limits_{k\in\mathbb{Z}_m}(-1)^k\hat{A}_{i+k+1},\sum\limits_{k\in\mathbb{Z}_m}(-1)^k\hat{B}_{i+k}},
\end{flalign*}
\begin{flalign*}
({\bf g})+({\bf g}')&=\sum\limits_{i\in \mathbb{Z}_{m}} \lr{\hat{I}_{i},\sum\limits_{k=0}^{m-1}(-1)^{k}\hat{A}_{i+k}}-\sum\limits_{i\in \mathbb{Z}_{m}} \lr{\hat{I}_{i},\hat{A}_{i}}\\&+\sum\limits_{i\in \mathbb{Z}_{m}} \lr{\hat{I}_{i},\sum\limits_{k=0}^{m-1}(-1)^{k}\hat{A}_{i+k+1}}-\sum\limits_{i\in \mathbb{Z}_{m}} \lr{\hat{I}_{i},\hat{A}_{i+1}}\\&=\sum\limits_{i\in \mathbb{Z}_{m}} (\lr{\hat{I}_{i},2\hat{A}_{i}}-\lr{\hat{I}_{i},\hat{A}_{i}}-\lr{\hat{I}_{i},\hat{A}_{i+1}})\\&=\sum\limits_{i\in \mathbb{Z}_{m}}(\lr{\hat{I}_{i},\hat{A}_{i}}-\lr{\hat{I}_{i},\hat{A}_{i+1}}),
\end{flalign*}
$$({\bf h})+({\bf h}')=\sum\limits_{i\in \mathbb{Z}_{m}}(\lr{\hat{I}_{i},\hat{B}_{i}}-\lr{\hat{I}_{i},\hat{B}_{i+1}}),$$ and
$$({\bf i})+({\bf i}')=-\frac{1}{2}\sum\limits_{i\in \mathbb{Z}_{m}} \lr{\sum\limits_{k\in\mathbb{Z}_m}(-1)^{k} \hat{A}_{i+k},\hat{B}_{i}}.$$
Thus, we obtain that
\begin{flalign*}&{\rm RHS}-{\rm LHS}=-\frac{1}{2}\sum\limits_{i\in \mathbb{Z}_{m}} \lr{\sum\limits_{k\in \mathbb{Z}_{m}}(-1)^{k} \hat{A}_{i+k+1},\hat{B}_{i}}^{(\bf m)}+\frac{1}{2}\sum\limits_{i\in \mathbb{Z}_{m}}\lr{\hat{B}_{i},\sum\limits_{k\in \mathbb{Z}_{m}}(-1)^{k}\hat{A}_{i+k}}^{(\bf j)}\\&-\frac{1}{2}\sum\limits_{i\in \mathbb{Z}_{m}}\lr{\hat{B}_{i},\sum\limits_{k\in \mathbb{Z}_{m}}(-1)^{k}\hat{A}_{i+k+1}}^{(\bf j'')}-\frac{1}{4}\sum\limits_{i\in \mathbb{Z}_{m}} \lr{\sum\limits_{k\in \mathbb{Z}_{m}}(-1)^{k} \hat{A}_{i+k},\sum\limits_{k\in \mathbb{Z}_{m}}(-1)^{k}\hat{B}_{i+k+1}}^{(\bf k)}\\&+\frac{1}{4}\sum\limits_{i\in \mathbb{Z}_{m}}\lr{\sum\limits_{k\in \mathbb{Z}_{m}}(-1)^{k} \hat{A}_{i+k+1},\sum\limits_{k\in \mathbb{Z}_{m}}(-1)^{k}\hat{B}_{i+k}}^{(\bf k')}+\sum\limits_{i\in \mathbb{Z}_{m}}\lr{\hat{A}_{i},\hat{B}_{i}}+\sum\limits_{i\in \mathbb{Z}_{m}}\lr{\hat{A}_{i},\hat{I}_{i}}\\&-\sum\limits_{i\in \mathbb{Z}_{m}}\lr{\hat{A}_{i},\hat{I}_{i-1}}+\sum\limits_{i\in \mathbb{Z}_{m}} \lr{\hat{B}_{i},\hat{I}_{i}}-\sum\limits_{i\in  \mathbb{Z}_{m}}\lr{\hat{B}_{i},\hat{I}_{i-1}}+\sum\limits_{i\in \mathbb{Z}_{m}}\lr{\hat{I}_{i},\hat{I}_{i-1}}-\sum\limits_{i\in \mathbb{Z}_{m}}\lr{\hat{I}_{i-1},\hat{I}_{i}}\\&+\frac{1}{2}\sum\limits_{i\in \mathbb{Z}_{m}}\lr{ \hat{I}_{i-1},\sum\limits_{k\in  \mathbb{Z}_{m}}(-1)^{k}\hat{A}_{i+k+1}}^{(\bf l)}+\frac{1}{2}\sum\limits_{i\in \mathbb{Z}_{m}}\lr{ \hat{I}_{i-1},\sum\limits_{k\in \mathbb{Z}_{m}}(-1)^{k}\hat{B}_{i+k+1}}^{(\bf n)}\\&-\frac{1}{2}\sum\limits_{i\in \mathbb{Z}_{m}}\lr{\hat{I}_{i},\sum\limits_{k\in \mathbb{Z}_{m}}(-1)^{k}\hat{A}_{i+k}}^{(\bf l')}-\frac{1}{2}\sum\limits_{i\in \mathbb{Z}_{m}}\lr{\hat{I}_{i},\sum\limits_{k\in \mathbb{Z}_{m}}(-1)^{k}\hat{B}_{i+k}}^{(\bf n')}\\&-\frac{1}{2}\sum\limits_{i\in \mathbb{Z}_{m}}\lr{ \sum\limits_{k\in \mathbb{Z}_{m}}(-1)^{k}\hat{A}_{i+k},\hat{B}_{i}}^{(\bf m')}-\sum\limits_{i\in \mathbb{Z}_{m}}\lr{\hat{A}_{i},\sum\limits_{k\in \mathbb{Z}_{m}}(-1)^{k}\hat{B}_{i+k}}+\sum\limits_{i\in  \mathbb{Z}_{m}}\lr{ \hat{A}_{i},\hat{B}_{i}}\\&+\sum\limits_{i\in \mathbb{Z}_{m}}\lr{ \hat{A}_{i},\hat{I}_{i-1}-\hat{I}_{i}}-\sum\limits_{i\in \mathbb{Z}_{m}}\lr{ \hat{B}_{i},\sum\limits_{k\in \mathbb{Z}_{m}}(-1)^{k}\hat{A}_{i+k}}^{(\bf j')}+\sum\limits_{i\in \mathbb{Z}_{m}}\lr{\hat{B}_{i},\hat{A}_{i}}+\sum\limits_{i\in \mathbb{Z}_{m}}\lr{\hat{B}_{i},\hat{I}_{i-1}-\hat{I}_{i}}\\&+\sum\limits_{i\in \mathbb{Z}_{m}}\lr{ \hat{I}_{i},\hat{A}_{i}}-\sum\limits_{i\in \mathbb{Z}_{m}}\lr{ \hat{I}_{i-1},\hat{A}_{i}}+\sum\limits_{i\in \mathbb{Z}_{m}}\lr{ \hat{I}_{i},\hat{B}_{i}}-\sum\limits_{i\in \mathbb{Z}_{m}}\lr{ \hat{I}_{i-1},\hat{B}_{i}}+\sum\limits_{i\in \mathbb{Z}_{m}}\lr{ \hat{I}_{i},\hat{I}_{i}}\\&-\sum\limits_{i\in \mathbb{Z}_{m}}\lr{ \hat{I}_{i},\hat{I}_{i-1}}+\sum\limits_{i\in \mathbb{Z}_{m}}\lr{ \hat{I}_{i-1},\hat{I}_{i}}-\sum\limits_{i\in  \mathbb{Z}_{m}}\lr{ \hat{I}_{i-1},\hat{I}_{i-1}}.
\end{flalign*}
Note that \begin{flalign*}
({\bf j})+({\bf j}')+({\bf j}'')&=-\frac{1}{2}\sum\limits_{i\in\mathbb{Z}_m}\lr{\hat{B}_i,\sum\limits_{k\in\mathbb{Z}_m}(-1)^k(\hat{A}_{i+k}+\hat{A}_{i+k+1})}
\\&=-\frac{1}{2}\sum\limits_{i\in\mathbb{Z}_m}\lr{\hat{B}_i,2\hat{A}_i}=-\sum\limits_{i\in\mathbb{Z}_m}\lr{\hat{B}_i,\hat{A}_i}
\end{flalign*}
and
\begin{flalign*}
({\bf k})+({\bf k}')&=\frac{1}{4}\sum\limits_{i\in\mathbb{Z}_m}\lr{\sum\limits_{k\in\mathbb{Z}_m}(-1)^k(\hat{A}_{i+k+1}-\hat{A}_{i+k-1}),\sum\limits_{k\in\mathbb{Z}_m}(-1)^k\hat{B}_{i+k}}
\\&=\frac{1}{4}\sum\limits_{i\in\mathbb{Z}_m}\lr{2(\hat{A}_i-\hat{A}_{i-1}),\sum\limits_{k\in\mathbb{Z}_m}(-1)^k\hat{B}_{i+k}}
\\&=\frac{1}{2}\sum\limits_{i\in\mathbb{Z}_m}\lr{\hat{A}_i-\hat{A}_{i-1},\sum\limits_{k\in\mathbb{Z}_m}(-1)^k\hat{B}_{i+k}}.
\end{flalign*}
Similarly, we have that
$({\bf l})+({\bf l}')=\sum\limits_{i\in\mathbb{Z}_m}\lr{\hat{I}_{i-1},\hat{A}_i-\hat{A}_{i-1}}$,
$({\bf m})+({\bf m}')=-\sum\limits_{i\in\mathbb{Z}_m}\lr{\hat{A}_i,\hat{B}_i}$ and
$({\bf n})+({\bf n}')=\sum\limits_{i\in\mathbb{Z}_m}\lr{\hat{I}_{i-1},\hat{B}_i-\hat{B}_{i-1}}$.
Hence,
\begin{flalign*}&{\rm RHS}-{\rm LHS}=-\sum\limits_{i\in \mathbb{Z}_{m}}\lr{\hat{B}_{i},\hat{A}_{i}}+\frac{1}{2}\sum\limits_{i\in \mathbb{Z}_{m}}\lr{\hat{A}_{i},\sum\limits_{k\in \mathbb{Z}_{m}}(-1)^{k}\hat{B}_{i+k}}^{(\bf x)}-\frac{1}{2}\sum\limits_{i\in \mathbb{Z}_{m}}\lr{\hat{A}_{i},\sum\limits_{k\in \mathbb{Z}_{m}}(-1)^{k}\hat{B}_{i+k+1}}^{(\bf y)}\\&+\sum\limits_{i\in \mathbb{Z}_{m}}\lr{\hat{I}_{i-1},\hat{A}_{i}}-\sum\limits_{i\in \mathbb{Z}_{m}} \lr{\hat{I}_{i},\hat{A}_{i}}-\sum\limits_{i\in \mathbb{Z}_{m}} \lr{\hat{A}_{i},\hat{B}_{i}}+\sum\limits_{i\in \mathbb{Z}_{m}} \lr{\hat{I}_{i-1},\hat{B}_{i}}-\sum\limits_{i\in \mathbb{Z}_{m}} \lr{\hat{I}_{i},\hat{B}_{i}}
+\sum\limits_{i\in \mathbb{Z}_{m}}\lr{\hat{A}_{i},\hat{B}_{i}}\\&+\sum\limits_{i\in \mathbb{Z}_{m}}\lr{\hat{A}_{i},\hat{I}_{i}}-\sum\limits_{i\in \mathbb{Z}_{m}}\lr{\hat{A}_{i},\hat{I}_{i-1}}+\sum\limits_{i\in \mathbb{Z}_{m}} \lr{\hat{B}_{i},\hat{I}_{i}}-\sum\limits_{i\in  \mathbb{Z}_{m}}\lr{\hat{B}_{i},\hat{I}_{i-1}}+\sum\limits_{i\in \mathbb{Z}_{m}}\lr{\hat{I}_{i},\hat{I}_{i-1}}-\sum\limits_{i\in \mathbb{Z}_{m}}\lr{\hat{I}_{i-1},\hat{I}_{i}}
\\&-\sum\limits_{i\in \mathbb{Z}_{m}}\lr{\hat{A}_{i},\sum\limits_{k\in \mathbb{Z}_{m}}(-1)^{k}\hat{B}_{i+k}}^{(\bf z)}+\sum\limits_{i\in  \mathbb{Z}_{m}}\lr{ \hat{A}_{i},\hat{B}_{i}}+\sum\limits_{i\in \mathbb{Z}_{m}}\lr{ \hat{A}_{i},\hat{I}_{i-1}-\hat{I}_{i}}+\sum\limits_{i\in \mathbb{Z}_{m}}\lr{\hat{B}_{i},\hat{A}_{i}}\\&+\sum\limits_{i\in \mathbb{Z}_{m}}\lr{\hat{B}_{i},\hat{I}_{i-1}-\hat{I}_{i}}+\sum\limits_{i\in \mathbb{Z}_{m}}\lr{ \hat{I}_{i},\hat{A}_{i}}-\sum\limits_{i\in \mathbb{Z}_{m}}\lr{ \hat{I}_{i-1},\hat{A}_{i}}+\sum\limits_{i\in \mathbb{Z}_{m}}\lr{ \hat{I}_{i},\hat{B}_{i}}-\sum\limits_{i\in \mathbb{Z}_{m}}\lr{ \hat{I}_{i-1},\hat{B}_{i}}\\&+\sum\limits_{i\in \mathbb{Z}_{m}}\lr{ \hat{I}_{i},\hat{I}_{i}}-\sum\limits_{i\in \mathbb{Z}_{m}}\lr{ \hat{I}_{i},\hat{I}_{i-1}}+\sum\limits_{i\in \mathbb{Z}_{m}}\lr{ \hat{I}_{i-1},\hat{I}_{i}}-\sum\limits_{i\in  \mathbb{Z}_{m}}\lr{ \hat{I}_{i-1},\hat{I}_{i-1}}.
\end{flalign*}
Noting that
$({\bf x})+({\bf y})+({\bf z})=-\sum\limits_{i\in\mathbb{Z}_m}\lr{\hat{A}_i,\hat{B}_i}$, we can easily obtain that ${\rm RHS}-{\rm LHS}=0$, i.e., ${\rm RHS}={\rm LHS}$.
Therefore, we complete the proof.
\end{proof}

\begin{remark}
$(1)$~The $m=1$ case of Theorem \ref{mainthm} has been similarly considered in \cite[Remark 5.6]{CLR}. According to \cite[Definition 5.3]{LinP}, we have the extended semi-derived Ringel--Hall algebra of $\A$. By \cite[Theorem 5.2]{Zhang2}, the $m$-periodic extended derived Hall algebra $\mathcal {D}\mathcal {H}_m^{\rm e}(\A)$ is isomorphic to a twisted version $\mathcal {SD}\mathcal {H}_{\mathbb{Z}_m,\frac{1}{2}}(\A)$ of the extended semi-derived Ringel--Hall algebra, where the twisting is in the sense of Bridgeland (cf. \cite{ZHC2}). Hence, Theorem \ref{mainthm} can give an embedding of algebras from the odd periodic derived Hall algebra $\mathcal {D}\mathcal {H}_m(\A)$ to $\mathcal {SD}\mathcal {H}_{\mathbb{Z}_m,\frac{1}{2}}(\A)$. This is essentially the algebra embedding given in \cite{LinP}, but the extended semi-derived Ringel--Hall algebra in \cite{LinP} is the untwisted version.

$(2)$~By \cite{Zhang2}, Definition \ref{maindef} also applies to even periodic derived category $D_m(\mathcal {A})$, but Definition \ref{jidef} does not. That is, the derived Hall algebra (without adding $K$-elements) of $D_m(\mathcal {A})$ for even $m$ has not been well-defined (cf. \cite[Remark 4.3]{Zhang2}). Hence, there does not exist a version of Theorem \ref{mainthm} for even $m$.
\end{remark}

\section*{Acknowledgments}
The authors would like to thank the anonymous referee for the helpful comments and suggestions. This work was partially supported by the National Natural Science Foundation of China (No. 12271257).

\end{document}